\newtheorem{theorem}{Theorem}[section]
\theoremstyle{definition}
\newtheorem{corollary}{Corollary}[section]
\theoremstyle{remark}
\numberwithin{equation}{section}
\newcommand{\abs}[1]{\lvert#1\rvert}
\begin{document}
\title[Totally real submanifolds of $(LCS)_n$-Manifolds]{Totally real submanifolds of $(LCS)_n$-Manifolds.}
\author[S. K. Hui  and T. Pal]{SHYAMAL KUMAR HUI and TANUMOY PAL}
\subjclass[2000]{53C15, 53C25.} \keywords{$(LCS)_n$-manifold, totally real submanifold, quarter symmetric metric connection}.
\begin{abstract}
The present paper deals with the study of totally real submanifolds and $\textit{C}$-totally real submanifolds of $(LCS)_n$-manifolds with
respect to Levi-Civita connection as well as quarter symmetric metric connection. It is proved that scalar curvature of $\textit{C}$-totally real submanifolds of $(LCS)_n$-manifold with respect to both the said connections are same.
\end{abstract}
\maketitle
\section{Introduction}
\indent As a generalization of LP-Sasakian manifold, recently Shaikh \cite{SHAIKH2} introduced the
notion of Lorentzian concircular structure manifolds (briefly, $(LCS)_n$-manifolds) with an example.
Such manifolds has many applications in the general theory of relativity and cosmology
(\cite{SHAIKH3}, \cite{SHAIKH4}).\\
\indent The notion of semisymmetric linear connection on a smooth manifold is introduced by Friedmann and Schouten \cite{FRSC}.
Then Hayden \cite{HAYDEN} introduced the idea of metric connection with torsion on a Riemannian manifold.
Thereafter Yano \cite{YANO} studied semisymmetric metric connection on a Riemannian manifold systematically. As a generalization of
semisymmetric connection, Golab \cite{GOLAB} introduced the idea
of quarter symmetric linear connection on smooth manifolds. A linear connection $\overline{\nabla}$ in an
$n$-dimensional smooth manifold $\widetilde{M}$ is said to be a quarter symmetric connection \cite{GOLAB}
if its torsion tensor $T$ is of the form
\begin{eqnarray}\label{1.1}
T(X,Y)&=&\overline{\nabla}_XY-\overline{\nabla}_YX-[X,Y]\\
\nonumber&&=\eta(Y)\phi X-\eta(X)\phi Y,
\end{eqnarray}
where $\eta$ is an 1-form and $\phi$ is a tensor of type $(1,1)$.
In particular, if $\phi X=X$ then the quarter symmetric connection reduces to semisymmetric connection.
Further if the quarter symmetric connection $\overline{\nabla}$ satisfies the condition $(\overline{\nabla}_Xg)(Y,Z)=0$
for all $X,\ Y,\ Z\in \chi(\widetilde{M})$, the set of all smooth vector fields on $\widetilde{M}$, then $\overline{\nabla}$
is said to be a quarter symmetric metric connection.\\
\indent Due to important applications in applied mathematics and theoretical physics, the geometry of submanifolds has become a subject of growing interest.
Analogous to almost Hermitian manifolds, the invariant and anti-invariant submanifolds are depend on the behaviour of almost contact metric structure
$\phi$. A submanifold of a contact metric manifold $\widetilde{M}$ is said to be anti-invariant if for any $X$ tangent to $M$, $\phi X$ is normal to $M$,
 i.e., $\phi (TM)\subset T^\bot M$ at every point of $M$, where $T^\bot M$ is the normal bundle of $M$. So, if a submanifold $M$ of a contact metric manifold $\widetilde{M}$ is normal to the structure vector field $\xi$, then it is anti-invariant. A submanifold $M$ in a contact metric manifold $\widetilde{M}$ is called a $\textit{C}$-totally real submanifold if every tangent vector of $M$ belongs to the contact distribution \cite{yama}. Thus a submanifold $M$ in a contact metric manifold is a $\textit{C}$-totally real submanifold if $\xi$ is normal to $M$. Consequently $\textit{C}$-totally real submanifolds in a contact metric manifold are anti-invariant, as they are normal to $\xi$. Recently Hui et al. (\cite{ATEHUI}, \cite{HUIAT}, \cite{HUIATPAL}, \cite{SHAIKH9}) 
 studied submanifolds of $(LCS)_{n}$-manifolds. The present paper deals with the totally real submanifolds and $\textit{C}$-totally real submanifolds of  $(LCS)_n$-manifolds with respect to Levi-Civita as well as quarter symmetric metric connection. It is shown that the scalar curvature of a $\textit{C}$-totally real submanifold of $(LCS)_n$-manifolds with respect to Levi-Civita connection and quarter symmetric metric connection are same. However in case of totally real submanifolds of $(LCS)_n$-manifolds with respect to Levi-Civita connection and quarter symmetric metric connection they are different. An inequality for the square length of the shape operator in case of totally real submanifold of $(LCS)_n$-manifold is derived. The equality case is also considered.
\section{preliminaries}
Let $\widetilde{M}$ be an $n$-dimensional Lorentzian manifold \cite{NIL} admitting a unit
timelike concircular vector field $\xi$, called the characteristic
vector field of the manifold. Then we have
\begin{equation}
\label{2.1}
g(\xi, \xi)=-1.
\end{equation}
Since $\xi$ is a unit concircular vector field, it follows that
there exists a non-zero 1-form $\eta$ such that for
\begin{equation}
\label{2.2}
g(X,\xi)=\eta(X),
\end{equation}
the equation of the following form holds \cite{15}
\begin{equation}
\label{2.3}
(\widetilde\nabla _{X}\eta)(Y)=\alpha \{g(X,Y)+\eta(X)\eta(Y)\},
\ \ \ \alpha\neq 0,
\end{equation}
\begin{equation}
\label{2.4}
\widetilde\nabla _{X}\xi = \alpha \{X +\eta(X)\xi\}, \ \ \ \alpha\neq 0,
\end{equation}
for all vector fields $X$, $Y$, where $\widetilde{\nabla}$ denotes the
operator of covariant differentiation with respect to the Lorentzian
metric $g$ and $\alpha$ is a non-zero scalar function satisfies
\begin{equation}
\label{2.5}
{\widetilde\nabla}_{X}\alpha = (X\alpha) = d\alpha(X) = \rho\eta(X),
\end{equation}
$\rho$ being a certain scalar function given by $\rho=-(\xi\alpha)$.
Let us take
\begin{equation}
\label{2.6}
\phi X=\frac{1}{\alpha}\widetilde\nabla_{X}\xi,
\end{equation}
then from (\ref{2.4}) and (\ref{2.6}) we have
\begin{equation}
\label{2.7} \phi X = X+\eta(X)\xi,
\end{equation}
\begin{equation}
\label{2.8}
g(\phi X,Y) = g(X,\phi Y),
\end{equation}
from which it follows that $\phi$ is a symmetric (1,1) tensor and
called the structure tensor of the manifold. Thus the Lorentzian
manifold $\widetilde{M}$ together with the unit timelike concircular vector
field $\xi$, its associated 1-form $\eta$ and an (1,1) tensor field
$\phi$ is said to be a Lorentzian concircular structure manifold
(briefly, $(LCS)_{n}$-manifold), \cite{SHAIKH2}. Especially, if we take
$\alpha=1$, then we can obtain the LP-Sasakian structure of
Matsumoto \cite{8}. In a $(LCS)_{n}$-manifold $(n>2)$, the following
relations hold \cite{SHAIKH2}:
\begin{equation}
\label{2.9}
\eta(\xi)=-1,\ \ \phi \xi=0,\ \ \ \eta(\phi X)=0,\ \ \
g(\phi X, \phi Y)= g(X,Y)+\eta(X)\eta(Y),
\end{equation}
\begin{equation}
\label{2.10}
\phi^2 X= X+\eta(X)\xi,
\end{equation}
\begin{equation}
\label{2.11}
\widetilde{R}(X,Y)Z =\phi \widetilde{R}(X,Y)Z +(\alpha^{2}-\rho)\{g(Y,Z)\eta(X)-g(X,Z)\eta(Y)\}\xi
\end{equation}
for all $X,\ Y,\ Z\in\Gamma(T\widetilde{M})$ Using (\ref{2.8}) in (\ref{2.11}) we get,
\begin{equation}\label{2.12}
\widetilde{R}(X,Y,Z,W)=\widetilde{R}(X,Y,Z,\phi W)+(\alpha^{2}-\rho)\{g(Y,Z)\eta(X)-g(X,Z)\eta(Y)\}\eta(W)
\end{equation}
\indent Let $M$ be a submanifold of dimension $m$ of a $(LCS)_n$-manifold $\widetilde{M}$ $(m<n)$ with induced
metric $g$. Also let $\nabla$ and $\nabla^{\perp}$ be the induced
connection on the tangent bundle $TM$ and the normal bundle
$T^{\perp}M$ of $M$ respectively. Then the Gauss and Weingarten
formulae are given by
\begin{equation}\label{2.13}
\widetilde{\nabla}_{X}Y = \nabla_{X}Y + h(X,Y)
\end{equation}
and
\begin{equation}\label{2.14}
\widetilde{\nabla}_{X}V = -A_{V}X + \nabla^{\perp}_{X}V
\end{equation}
for all $X,\ Y \in\Gamma(TM)$ and $V\in\Gamma(T^{\perp}M)$, where $h$
and $A_V$ are second fundamental form and the shape operator
(corresponding to the normal vector field $V$) respectively for the
immersion of $M$ into $\widetilde{M}$ and they are related by \cite{YANO3}
\begin{equation}\label{2.15}
g(h(X,Y),V) = g(A_{V}X,Y)
\end{equation}
for any $X,\ Y \in\Gamma(TM)$ and $V\in\Gamma(T^{\perp}M)$. And the equation of Gauss is given by
\begin{equation}\label{2.16}
\widetilde{R}(X,Y,Z,W)=R(X,Y,Z,W)+g(h(X,Z),h(Y,W))-g(h(X,W),h(Y,Z))
\end{equation}
for any vectors $X,\ Y,\ Z,\ W$ tangent to $M$.\\
\indent Let $\{e_i:i=1,2,\cdots ,n\}$ be an orthonormal basis of the tangent space $\widetilde{M}$ such that, refracting to $M^m$,
$ \{e_1,e_2,\cdots ,e_m\}$ is the orthonormal basis to the tangent space $T_xM$ with respect to induced connection.\\
 We write $$h_{ij}^r=g(h(e_i,e_j),e_r).$$\\ Then the square length of $h$ is $$\abs{\abs{h}}^2=\sum_{i,j=1}^{m}g(h(e_i,e_j),h(e_i,e_j))$$\\
 and the mean curvature of $M$ associated to $\nabla$ is $$H=\frac{1}{m}\sum_{i=1}^{m}h(e_i,e_i).$$
The quarter symmetric metric connection $\overline{\widetilde{\nabla}}$ and Riemannian connection $\widetilde{\nabla}$ on a
$(LCS)_n$-manifold $\widetilde{M}$ are related by \cite{HUI}
\begin{equation}\label{2.17}
\overline{\widetilde{\nabla}}_XY=\widetilde{\nabla}_XY+ \eta(Y)\phi X-g(\phi X,Y)\xi.
\end{equation}
If $\overline{\widetilde{R}}$ and $\widetilde{R}$ are the curvature tensors of a $(LCS)_n$-manifold $\widetilde{M}$ with respect to
quarter symmetric metric connection $\overline{\widetilde{\nabla}}$ and Riemannian connection $\widetilde{\nabla}$, then
\begin{eqnarray}\label{2.18}
  \overline{\widetilde{R}}(X,Y,Z,W)&=& \widetilde{R}(X,Y,Z,W)+(2\alpha-1)[g(\phi X,Z)g(\phi Y,W)  \\
 \nonumber &&-g(\phi Y,Z)g(\phi X,W)]+\alpha [\eta(Y)g(X,W) \\
\nonumber &&-\eta(X)g(Y,W)]\eta(Z) +\alpha [ g(Y,Z)\eta(X)\\
\nonumber&&-g(X,Z)\eta(Y)]\eta(W)
\end{eqnarray}
for all $X,\ Y,\ Z\ \text{and}\ W$ on $\chi (\widetilde{M})$.\\
Let $L$ be a $k$-plane section of $T_xM$ and $X$ be a unit  vector in $L$. We choose an orthonormal basis $\{e_1,e_2,\cdots ,e_k\}$
of $L$ such that $e_1=X$.
\indent Then the Ricci curvature $Ric_L$ of $L$ at $X$ is defined by
\begin{equation}\label{2.19}
Ric_L(X)=K_{12}+K_{13}+\cdots+K_{1k},
\end{equation}
where $K_{ij}$ denotes the sectional curvature of the $2$-plane section spanned by $e_i, e_j$. Such a curvature is called a $k$-Ricci curvature.\\
\indent The scalar curvature $\tau$ of the $k$-plane section $L$ is given by
\begin{equation}\label{2.20}
  \tau(L)=\displaystyle\sum_{i\leq i<j\leq k}K_{ij}.
\end{equation}
\indent For each integer $k$, $2\leq k\leq n$, the Riemannian invariant $\Theta_k$ on an $n$-dimensional Riemannian manifold
$M$ is defined by
\begin{equation}\label{2.21}
\Theta_k(x)=\frac{1}{k-1}\displaystyle\inf_{L.X}Ric_L(X),\ \ \ \ x\in M,
\end{equation}
where $L$ runs over all $k$-plane sections in $T_xM$ and $X$ runs over all unit vectors in $L$.
indent The relative null space for a submanifold $M$ of a Riemannian manifold at a point $x\in M$ is defined by
\begin{equation}\label{2.22}
\emph{N}_x=\{X\in T_xM| h(X,Y)=0,Y\in T_xM\}.
\end{equation}
\section{Main results}
This section deals with the study of totally real submanifolds of $(LCS)_n$-manifolds with respect to Levi-Civita
 as well as quarter symmetric metric connection. We prove the following:
\begin{theorem}
  Let $M$ be a toatally real submanifold of dimension $m$ $(m<n)$ of a $(LCS)_n$-manifold $\widetilde{M}$. Then
  \begin{equation}\label{3.1}
  m^2\abs{\abs{H}}^2=2\tau +\abs{\abs{h}}^2+(m-1)(\alpha^2-\rho),
  \end{equation}
  where $\tau$ is the scalar curvature of $M$.
\end{theorem}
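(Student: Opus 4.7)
The plan is to derive (3.1) by contracting the Gauss equation (2.16) over an orthonormal frame of $T_xM$ and then evaluating the resulting ambient-curvature sum using the $(LCS)_n$ structure. Fix $x\in M$ and an orthonormal basis $\{e_1,\dots,e_m\}$ of $T_xM$. Taking $X=W=e_i$ and $Y=Z=e_j$ in (2.16) and summing over all pairs with $i\neq j$, and using $\sum_{i,j}g(h(e_i,e_j),h(e_i,e_j))=\|h\|^2$ together with $\sum_{i,j}g(h(e_i,e_i),h(e_j,e_j))=m^2\|H\|^2$ (the $i=j$ diagonal contributions cancelling on both sides), the computation yields
\[
m^2\|H\|^2\;=\;2\tau+\|h\|^2\;-\;\sum_{i\neq j}\widetilde R(e_i,e_j,e_j,e_i).
\]
Thus (3.1) reduces to the single identity $\sum_{i\neq j}\widetilde R(e_i,e_j,e_j,e_i)=-(m-1)(\alpha^2-\rho)$.

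To establish this ambient-curvature identity, I would first derive, by a direct computation from (2.3)--(2.6), the auxiliary formula $\widetilde R(X,\xi)\xi=(\rho-\alpha^2)[X+\eta(X)\xi]$, giving $\widetilde R(X,\xi,\xi,Y)=(\rho-\alpha^2)[g(X,Y)+\eta(X)\eta(Y)]$ after pairing with $Y$. Next, applying (2.12) with $W=e_i$ (using $g(e_j,e_j)=1$, $g(e_i,e_j)=0$ for $i\neq j$) gives
\[
\widetilde R(e_i,e_j,e_j,e_i)\;=\;\widetilde R(e_i,e_j,e_j,\phi e_i)\;+\;(\alpha^2-\rho)\,\eta(e_i)^2.
\]
The totally real hypothesis $\phi(TM)\subset T^\perp M$ places $\phi e_i$ in the normal bundle; combining this with the decomposition $\phi X=X+\eta(X)\xi$ and the $\xi$-curvature identity above allows the residual term $\widetilde R(e_i,e_j,e_j,\phi e_i)$ to be rewritten purely in terms of data on the $\xi$-direction. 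Summing over $i\neq j$ and tracing the $\eta$-contributions across the frame should collapse the right-hand side to the required value $(m-1)(\rho-\alpha^2)$.

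The main obstacle is precisely this second step: carefully bookkeeping the $\eta(e_i)^2$ terms produced by the $\phi$-shift in (2.12), exploiting the totally real hypothesis to dispose of the tangential part of $\phi e_i$, and verifying that the full ambient sum collapses to the single constant $(m-1)(\alpha^2-\rho)$ without leaving any residual $\eta$-dependence that would spoil the clean form of (3.1).
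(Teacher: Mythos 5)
Your overall strategy is the same as the paper's: combine the Gauss equation (2.16) with the ambient identity (2.12), use the totally real hypothesis to deal with the $\widetilde R(\cdot,\cdot,\cdot,\phi\,\cdot)$ term, and contract over an orthonormal frame. Your contraction bookkeeping (the cancellation of the diagonal terms, the reduction of (3.1) to $\sum_{i\neq j}\widetilde R(e_i,e_j,e_j,e_i)=-(m-1)(\alpha^2-\rho)$) matches what the paper does after its equation (3.3).

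The gap is in the step you yourself flag as the ``main obstacle,'' and the route you propose for it cannot close. Since $\phi W=W+\eta(W)\xi$, the identity (2.12) is \emph{equivalent} to the $\xi$-curvature identity $\widetilde R(X,Y,Z,\xi)=-(\alpha^2-\rho)\{g(Y,Z)\eta(X)-g(X,Z)\eta(Y)\}$ (which is exactly your auxiliary formula $\widetilde R(X,\xi)\xi=(\rho-\alpha^2)[X+\eta(X)\xi]$ in disguise): writing $\widetilde R(e_i,e_j,e_j,\phi e_i)=\widetilde R(e_i,e_j,e_j,e_i)+\eta(e_i)\widetilde R(e_i,e_j,e_j,\xi)$ and substituting back into (2.12) produces the tautology $0=0$. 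So ``rewriting the residual term purely in terms of data on the $\xi$-direction'' reintroduces the very quantity $\widetilde R(e_i,e_j,e_j,e_i)$ you are trying to compute; (2.12) only pins down its $\xi$-component and gives no information about the full sectional curvatures. The input that actually does the work in the paper is the blunt assertion that, because $M$ is anti-invariant, $\phi W\in T^{\perp}M$ and hence $\widetilde R(X,Y,Z,\phi W)=g(\widetilde R(X,Y)Z,\phi W)=0$ for all $X,Y,Z,W$ tangent to $M$; without this (or an equivalent vanishing statement) the term $\widetilde R(e_i,e_j,e_j,\phi e_i)$ survives and the sum does not collapse to a constant. You would also still need to evaluate the residual $(\alpha^2-\rho)(m-1)\sum_i\eta(e_i)^2$ contribution (the paper implicitly takes $\sum_i\eta(e_i)^2$ to contribute $-1$, treating $\xi$ as tangent to $M$), which you likewise leave unresolved. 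As written, the proposal identifies the correct target identity but does not prove it.
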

\begin{proof}
\indent Let $M$ be a totally real submanifold of a $(LCS)_n$-manifold $\widetilde{M}$.
Now from (\ref{2.12}) and (\ref{2.16}), we get
\begin{eqnarray}
 \label{3.2} R(X,Y,Z,W) &=& \widetilde{R}(X,Y,Z,\phi W)+(\alpha^2-\rho)\{g(Y,Z)\eta(X) \\
  \nonumber&&-g(X,Z)\eta(Y)\}\eta(W)+g(h(X,W),h(Y,Z))\\
  \nonumber&&-g(h(X,Z),h(Y,W))
\end{eqnarray}
for any $X,\ Y,\ Z,\ W\in \Gamma(TM)$.\\
Since $M$ is totally real submanifold i.e., anti-invariant so $$\widetilde{R}(X,Y,Z,\phi W)= g(\widetilde{R}(X,Y)Z,\phi W)=0$$ and hence
 (\ref{3.2}) yields
\begin{eqnarray}\label{3.3}
  R(X,Y,Z,W) &=& (\alpha^2-\rho)\{g(Y,Z)\eta(X)-g(X,Z)\eta(Y)\}\eta(W) \\
  \nonumber&&+g(h(X,W),h(Y,Z))-g(h(X,Z),h(Y,W))
\end{eqnarray}
for any $X,\ Y,\ Z,\ W\in \Gamma(TM)$.
Putting $X=W=e_i$ and $Y=Z=e_j$ in (\ref{3.3}) and taking summation over $1\leq i<j\leq m$, we get
\begin{eqnarray*}
  \displaystyle\sum_{1\leq i<j\leq m}R(e_i,e_j,e_j,e_i) &=& (\alpha^2-\rho)\displaystyle\sum_{1\leq i<j\leq m}[g(e_j,e_j)\eta(e_i)\eta(e_i)
-g(e_i,e_j)\eta(e_j)\eta(_j)]\\
&& + \displaystyle\sum_{1\leq i<j\leq m} g(h(e_i,e_i),h(e_j,e_j))- \displaystyle\sum_{1\leq i<j\leq m} g(h(e_i,e_j),h(e_j,e_i))
\end{eqnarray*}
i.e.,
\begin{equation}\label{3.4}
2\tau=-(m-1)(\alpha^2-\rho)+m^2\abs{\abs{H}}^2-\abs{\abs{h}}^2,
\end{equation}
which implies (\ref{3.1}).
\end{proof}
\begin{corollary}
  Let $M$ be a $\textit{C}$-totally real submanifold of dimension $m$ $(m<n)$ of a $(LCS)_n$-manifold $\widetilde{M}$. Then
  \begin{equation*}\label{3.1}
  m^2\abs{\abs{H}}^2=2\tau +\abs{\abs{h}}^2,
  \end{equation*}
  where $\tau$ is the scalar curvature of $M$.
\end{corollary}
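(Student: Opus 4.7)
The plan is to obtain the corollary as a direct specialization of Theorem 3.1, exploiting the extra condition that defines a $C$-totally real submanifold. Recall from the introduction that for a $C$-totally real submanifold the characteristic vector field $\xi$ is normal to $M$, so $\eta(X)=g(X,\xi)=0$ for every $X\in\Gamma(TM)$.

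First I would revisit equation (\ref{3.3}) in the proof of Theorem 3.1, which reads
\begin{eqnarray*}
R(X,Y,Z,W) &=& (\alpha^2-\rho)\{g(Y,Z)\eta(X)-g(X,Z)\eta(Y)\}\eta(W)\\
&& +\,g(h(X,W),h(Y,Z))-g(h(X,Z),h(Y,W)).
\end{eqnarray*}
Since $X,Y,Z,W$ are tangent to $M$ and $\xi$ is normal, each factor $\eta(X),\eta(Y),\eta(W)$ vanishes, so the entire $(\alpha^2-\rho)$-term drops out. The Gauss equation therefore reduces to the purely extrinsic identity
\[
R(X,Y,Z,W)=g(h(X,W),h(Y,Z))-g(h(X,Z),h(Y,W)).
\]

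Next I would run the same contraction used in the theorem: take an orthonormal tangent frame $\{e_1,\dots,e_m\}$, set $X=W=e_i$, $Y=Z=e_j$, and sum over $1\le i<j\le m$. The left side yields $2\tau$, the first sum on the right gives $m^2\|H\|^2$, and the second gives $\|h\|^2$, producing
\[
2\tau = m^2\|H\|^2-\|h\|^2,
\]
which is exactly the assertion. Equivalently, plug $\eta(e_i)=0$ into (\ref{3.1}) to kill the $(m-1)(\alpha^2-\rho)$ term directly.

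There is no real obstacle: the only point to verify is that the $C$-totally real hypothesis indeed forces $\eta$ to vanish on $TM$, which is immediate from $\xi\in\Gamma(T^{\perp}M)$ and the definition $\eta(X)=g(X,\xi)$. Everything else is just bookkeeping already carried out in Theorem 3.1.
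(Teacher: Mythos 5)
Your proposal is correct and follows exactly the paper's own argument: use $\xi\in\Gamma(T^{\perp}M)$ to get $\eta|_{TM}=0$, reduce (\ref{3.3}) to the purely extrinsic Gauss identity, and contract as in Theorem 3.1. Nothing further is needed.
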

\begin{proof}
\indent In a $\textit{C}$-totally real submanifold, it is known that $\eta(X)=0$ for all $X\in \Gamma(TM)$ and $\xi\in T^\bot M$.
 Then (\ref{3.3}) yields
 \begin{eqnarray*}
   R(X,Y,Z,W) &=& g(h(X,W),h(Y,Z))-g(h(X,Z),h(Y,W)),
 \end{eqnarray*}
 from which as similar in above we can prove that $ m^2\abs{\abs{H}}^2=2\tau +\abs{\abs{h}}^2$.
\end{proof}
\indent Now  let $M$ be a submanifold of dimension $m$ $(m<n)$ of a $(LCS)_n$-manifold $\widetilde{M}$ with respect to quarter
 symmetric metric connection $\overline{\widetilde{\nabla}}$  and $\overline{\nabla}$ be the induced connection of $M$
 associated to the quarter symmetric metric connection. Also let $\overline{h}$ be the second fundamental form of $M$
with respect to $\overline{\nabla}$. Then the Gauss formula can be written as
\begin{equation}\label{3.5}
\overline{\widetilde{\nabla}}_XY=\overline{\nabla}_XY+\overline{h}(X,Y)
\end{equation}
and hence by virtue of (\ref{2.13}) and (\ref{2.17}) we get
\begin{eqnarray}\label{3.6}
 \overline{\nabla}_XY+\overline{h}(X,Y) &=& \nabla_XY+h(X,Y)+\eta(Y)\phi X-g(\phi X,Y)\xi
\end{eqnarray}
If $M$ is totally real submanifold of $\widetilde{M}$ then $\phi X\in T^\bot M$ for any $X\in TM$ and hence $g(\phi X,Y)=0$ for $X,\ Y\in TM$.
So, equating the normal part from (\ref{3.6}) we get
\begin{equation}\label{3.7}
\overline{h}(X,Y)=h(X,Y)+\eta(Y)\phi X.
\end{equation}
Further, if $M$ is $\textit{C}$-totally real submanifold of $\widetilde{M}$ then $\xi \in T^\bot M$ and hence $\eta (Y)=0$ for all $Y\in TM$.
So, (\ref{3.7}) yields
\begin{equation}\label{3.8}
\overline{h}(X,Y)=h(X,Y).
\end{equation}
Let $U$ be a unit tangent vector at $x\in \widetilde{M}$ and $\{e_i:i=1,2,\cdots ,n\}$ be an orthonormal basis of the 
tangent space $\widetilde{M}$ such that $e_1=U$ refracting to $M^m$, $\{e_1,e_2,\cdots ,e_m\}$ is the orthonormal 
basis to the tangent space $T_xM$ with respect to induced quarter symmetric metric connection. Then we have the following:
\begin{theorem}
 Let $M$ be a totally real submanifold of a $(LCS)_n$-manifold $\widetilde{M}$ with respect to quarter symmetric metric connection then
 \begin{equation}\label{3.9}
 m^2\abs{\abs{H}}^2=2\overline{\tau}+\abs{\abs{h}}^2+(2m-1)\alpha+m\alpha\eta^2(U),
 \end{equation}
 where $\overline{\tau}$ is the scalar curvature of $M$ with respect to induced connection associated to the quarter symmetric metric connection.
 \end{theorem}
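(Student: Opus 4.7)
The plan is to mirror the argument of Theorem 3.1, but replacing the Levi-Civita curvature tensor by $\overline{\widetilde{R}}$ via (\ref{2.18}) and the second fundamental form $h$ by $\overline{h}$ via (\ref{3.7}). First I would derive, starting from (\ref{3.5}), the Gauss-type identity for $\overline{\widetilde{\nabla}}$,
\begin{equation*}
\overline{\widetilde{R}}(X,Y,Z,W)=\overline{R}(X,Y,Z,W)+g(\overline{h}(X,Z),\overline{h}(Y,W))-g(\overline{h}(X,W),\overline{h}(Y,Z)),
\end{equation*}
analogous to (\ref{2.16}), taking care to verify that the tangential projection works despite the fact that $\overline{\widetilde{\nabla}}$ has non-vanishing torsion.

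Next I would specialize (\ref{2.18}) to the totally real case. Since $\phi X\in T^{\bot}M$ for every $X\in TM$, the inner products $g(\phi X,Z)$ and $g(\phi Y,W)$ vanish for $X,Y,Z,W\in\Gamma(TM)$, so the $(2\alpha-1)$-terms in (\ref{2.18}) drop out. Simultaneously, $\widetilde{R}(X,Y,Z,\phi W)=0$ by totally-reality, so (\ref{2.12}) reduces $\widetilde{R}(X,Y,Z,W)$ to $(\alpha^{2}-\rho)\{g(Y,Z)\eta(X)-g(X,Z)\eta(Y)\}\eta(W)$. Substituting both into the Gauss-type identity yields a closed expression for $\overline{R}(X,Y,Z,W)$ as a sum of an intrinsic piece, depending only on $\alpha,\rho,\eta,g$, and a quadratic piece in $\overline{h}$.

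I would then set $X=W=e_{i}$, $Y=Z=e_{j}$ with the orthonormal basis chosen so that $e_{1}=U$, and sum over $1\leq i<j\leq m$. The left side becomes $2\overline{\tau}$. On the right, the intrinsic $\alpha,\rho,\eta,g$-piece evaluates, using the orthonormality and $\eta(e_{1})=\eta(U)$, into the stated constants $(2m-1)\alpha$ and $m\alpha\eta^{2}(U)$. The $\overline{h}$-piece, after expanding via (\ref{3.7}) as $\overline{h}(e_{i},e_{j})=h(e_{i},e_{j})+\eta(e_{j})\phi e_{i}$, produces the classical $m^{2}\abs{\abs{H}}^{2}-\abs{\abs{h}}^{2}$ from the pure $h$-terms plus cross terms of the form $\eta(e_{j})g(\phi e_{i},h(e_{i},e_{j}))$ and $\eta(e_{i})\eta(e_{j})g(\phi e_{i},\phi e_{j})$; the latter is handled through the identity $g(\phi X,\phi Y)=g(X,Y)+\eta(X)\eta(Y)$ recorded in (\ref{2.9}).

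The main obstacle is the careful bookkeeping in this last step. Because $\overline{\widetilde{\nabla}}$ has torsion, $\overline{h}$ is non-symmetric (as can be read off from (\ref{3.7})), so the difference $g(\overline{h}(X,Z),\overline{h}(Y,W))-g(\overline{h}(X,W),\overline{h}(Y,Z))$ carries extra $\eta$-cross terms that are absent in the Levi-Civita computation of Theorem 3.1. I expect these cross terms, combined with the $\alpha$-correction arising from (\ref{2.18}), to assemble precisely into the right-hand side $(2m-1)\alpha+m\alpha\eta^{2}(U)$ of (\ref{3.9}); verifying that no stray contributions remain will be the delicate bit.
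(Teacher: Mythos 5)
Your plan is essentially the paper's own proof: write the Gauss equation for the quarter symmetric connection, substitute (\ref{2.18}) together with (\ref{2.12}), kill the $\widetilde{R}(X,Y,Z,\phi W)$ and $(2\alpha-1)$ terms by total reality, expand $\overline{h}$ via (\ref{3.7}), and trace over an orthonormal frame with $e_1=U$. One small simplification you can note: the quadratic cross terms $\eta(W)\eta(Z)g(\phi X,\phi Y)$ cancel identically in the difference $g(\overline{h}(X,Z),\overline{h}(Y,W))-g(\overline{h}(X,W),\overline{h}(Y,Z))$, so the identity $g(\phi X,\phi Y)=g(X,Y)+\eta(X)\eta(Y)$ is not actually needed there.
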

 \begin{proof}
\indent In case of $(LCS)_n$-manifold $\widetilde{M}$ with respect to quarter symmetric metric connection, the relation (\ref{2.16}) becomes
\begin{eqnarray}\label{3.10}
 \overline{\widetilde{R}}(X,Y,Z,W) &=&\overline{R}(X,Y,Z,W)+g(\overline{h}(X,Z),\overline{h}(Y,W)) \\
\nonumber&&-g(\overline{h}(X,W),\overline{h}(Y,Z)).
\end{eqnarray}
In view of (\ref{2.7}) and (\ref{2.8}) (\ref{3.10}) yields
\begin{eqnarray}\label{3.11}
  \overline{R}(X,Y,Z,W) &=& \widetilde{R}(X,Y,Z,\phi W)+(\alpha^2-\rho)\{g(Y,Z)\eta(X) \\
  \nonumber&&-g(X,Z)\eta(Y)\}\eta(W)+(2\alpha-1)[g(\phi X,Z)g(\phi Y,W)  \\
 \nonumber &&-g(\phi Y,Z)g(\phi X,W)]+\alpha [\eta(Y)g(X,W) \\
\nonumber &&-\eta(X)g(Y,W)]\eta(Z)+\alpha [ g(Y,Z)\eta(X)\\
\nonumber&&-g(X,Z)\eta(Y)]\eta(W) \\
 \nonumber&&+g(\overline{h}(X,W),\overline{h}(Y,Z))-g(\overline{h}(X,Z),\overline{h}(Y,W)).
\end{eqnarray}
Since $M$ is totally real, therefore $g(\phi X,Y)=0$ for all $X,\ Y\in TM$ and (\ref{3.7}) holds. Thus (\ref{3.11}) becomes
\begin{eqnarray}\label{3.12}
 \overline{R}(X,Y,Z,W) &=&(\alpha^2-\rho)\{g(Y,Z)\eta(X)-g(X,Z)\eta(Y)\}\eta(W)\\
\nonumber&& +\alpha [\eta(Y)g(X,W)-\eta(X)g(Y,W)]\eta(Z)\\
\nonumber&&+\alpha [ g(Y,Z)\eta(X)-g(X,Z)\eta(Y)]\eta(W) \\
 \nonumber&&+g(h(X,W),h(Y,Z))-g(h(X,Z),h(Y,W))\\
 \nonumber&&-\eta(Z)g(h(X,W),\phi Y)-\eta(W)g(\phi X,h(Y,Z))\\
 \nonumber&&+\eta(Z)g(\phi X,h(Y,W))+\eta(W)g(h(X,Z),\phi Y).
\end{eqnarray}
Putting $X=W=e_i$ and $Y=Z=e_j$ in (\ref{3.12}) and taking summation over $1\leq i<j\leq m$ we get
\begin{eqnarray}\label{3.13}
2\overline{\tau} &=&-(m-1)(\alpha^2-\rho)-\alpha(1+\eta^2(U))m-\alpha(m-1)\\
\nonumber&&+m^2\abs{\abs{H}}^2-\abs{\abs{h}}^2,
\end{eqnarray}
from which (\ref{3.9}) follows.
 \end{proof}
 \begin{corollary}
 Let $M$ be a $\textit{C}$-totally real submanifold of a $(LCS)_n$-manifold $\widetilde{M}$ with respect to quarter symmetric metric connection. Then
 \begin{equation}\label{3.14}
 m^2\abs{\abs{H}}^2=2\overline{\tau}+\abs{\abs{h}}^2,
 \end{equation}
 where $\overline{\tau}$ is the scalar curvature of $M$ with respect to induced quarter symmetric metric connection.
 \end{corollary}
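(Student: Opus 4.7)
The plan is to specialize the preceding Theorem to the $C$-totally real setting, where the hypothesis $\xi\in T^{\perp}M$ forces $\eta(X)=g(X,\xi)=0$ for every $X\in\Gamma(TM)$, and where identity (\ref{3.8}) already ensures $\overline{h}=h$, so that the mean curvature $H$ is unchanged under the passage from $\nabla$ to $\overline{\nabla}$.

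First I would substitute $\eta(X)=\eta(Y)=\eta(Z)=\eta(W)=0$ into the expression (\ref{3.12}) for $\overline{R}(X,Y,Z,W)$ used in the proof of the Theorem. Every term carrying an $\eta$ factor --- the $(\alpha^{2}-\rho)$ term, both $\alpha$ terms, and all four mixed $\eta\phi h$ terms --- vanishes. What remains is the purely extrinsic identity
\begin{equation*}
\overline{R}(X,Y,Z,W)=g(h(X,W),h(Y,Z))-g(h(X,Z),h(Y,W)),
\end{equation*}
which is formally identical to the equation used in the proof of the Corollary attached to the first Theorem (the Levi-Civita case).

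Next I would trace over an orthonormal basis exactly as in the proof of the Theorem: set $X=W=e_{i}$, $Y=Z=e_{j}$ and sum over $1\le i<j\le m$. Using $mH=\sum_{i}h(e_{i},e_{i})$, the first right-hand sum collapses to $\frac{1}{2}\bigl(m^{2}\abs{\abs{H}}^{2}-\sum_{i}\abs{\abs{h(e_{i},e_{i})}}^{2}\bigr)$, while the second, by symmetry of $h$, yields $\frac{1}{2}\bigl(\abs{\abs{h}}^{2}-\sum_{i}\abs{\abs{h(e_{i},e_{i})}}^{2}\bigr)$. Subtracting gives $2\overline{\tau}=m^{2}\abs{\abs{H}}^{2}-\abs{\abs{h}}^{2}$, which rearranges to the desired identity (\ref{3.14}).

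Since every $\eta$- and $\phi$-dependent term in (\ref{3.12}) is annihilated by the $C$-totally real hypothesis, there is essentially no obstacle: the argument is a one-line specialization of the Theorem. The only thing worth double-checking is that the mean curvature $H$ really is common to both connections, which is immediate from (\ref{3.8}), and that the scalar-curvature convention is consistent with the one used in the Theorem's trace computation, so that the factor of $2$ on the left-hand side appears correctly.
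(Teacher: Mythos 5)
Your proof is correct and follows the paper's own argument: setting $\eta=0$ on tangent vectors in (\ref{3.12}), invoking $\overline{h}=h$ from (\ref{3.8}), and tracing over an orthonormal basis exactly as in the Levi-Civita case. (Your sign for the surviving $h$-terms is in fact the one consistent with (\ref{3.12}); the paper's displayed equation (\ref{3.15}) has the two terms transposed, evidently a typo that does not affect the conclusion.)
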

 \begin{proof}
 \indent If $M$ is $\textit{C}$-totally real submanifold then $\eta(Y)=0$ for all $Y\in TM$ and hence (\ref{3.12}) implies that
 \begin{equation}\label{3.15}
 \overline{R}(X,Y,Z,W)=g(h(X,Z),h(Y,W))-g(h(X,W),h(Y,Z))
 \end{equation}
 from which, by similar as above (\ref{3.14}) follows.
 \end{proof}
 From Corollary 3.1 and Corollary 3.2 we get $\tau=\overline{\tau}$ i.e., the scalar curvature of $\textit{C}$-totally 
 real submanifold of a $(LCS)_n$-manifold with respect to induced Levi-Civita connection and induced quarter 
 symmetric metric connection are identical. Thus we can state the following:
 \begin{theorem}
 Let $M$ be a $\textit{C}$-totally real submanifold of a $(LCS)_n$-manifold $\widetilde{M}$. Then the scalar curvature of $M$ with respect to induced Levi-Civita connection and induced quarter symmetric metric connection are same.
 \end{theorem}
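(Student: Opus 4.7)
The plan is essentially to read off the conclusion from the two preceding corollaries, so the proof should be almost a one-liner. First I would invoke equation (\ref{3.8}), which established that for a $\textit{C}$-totally real submanifold the second fundamental form with respect to the quarter symmetric metric connection agrees with that of the Levi-Civita connection: $\overline{h}(X,Y)=h(X,Y)$ for all $X,Y\in\Gamma(TM)$. This is the crucial structural input, since it guarantees that the quantities $m^2\abs{\abs{H}}^2$ and $\abs{\abs{h}}^2$ are identical whether computed using $\nabla$ or $\overline{\nabla}$.

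Next I would write down the two curvature identities already proved. Corollary 3.1 gives
\begin{equation*}
m^2\abs{\abs{H}}^2 = 2\tau + \abs{\abs{h}}^2,
\end{equation*}
while Corollary 3.2 gives
\begin{equation*}
m^2\abs{\abs{H}}^2 = 2\overline{\tau} + \abs{\abs{h}}^2.
\end{equation*}
Subtracting these two relations and using the equality of $H$ and $\abs{\abs{h}}^2$ established via (\ref{3.8}), every term on the left and every term on the right except the scalar curvatures cancels, leaving $2\tau = 2\overline{\tau}$, hence $\tau = \overline{\tau}$.

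I do not anticipate any genuine obstacle. The only point that deserves a sentence of care is the observation that the $H$ appearing in Corollary 3.2 is computed from $\overline{h}$, whereas the $H$ in Corollary 3.1 is computed from $h$; a priori these could differ, but (\ref{3.8}) shows they coincide in the $\textit{C}$-totally real case, which is exactly why the cancellation goes through. All of the real work has already been absorbed into the derivation of (\ref{3.3}), (\ref{3.12}), and (\ref{3.8}); the final theorem is then a purely formal consequence.
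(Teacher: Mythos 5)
Your proposal is correct and is essentially identical to the paper's argument, which likewise obtains $\tau=\overline{\tau}$ by subtracting the identities of Corollary 3.1 and Corollary 3.2. Your extra remark that (\ref{3.8}) is what guarantees the $\abs{\abs{H}}^2$ and $\abs{\abs{h}}^2$ terms in the two corollaries are literally the same quantity is a worthwhile point of care that the paper leaves implicit.
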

 Next we prove the following:
 \begin{theorem}
 Let $\widetilde{M}$ be a $(LCS)_n$-manifold and $M$ be a totally real submanifold of $\widetilde{M}$ of dimension $m\ (m<n)$. Then\\
 (i) for each unit vector $X\in T_xM$,
 \begin{equation}\label{3.16}
4 Ric(X)\leq m^2\abs{\abs{H}}^2+2(\alpha^2-\rho)(m-2)+4(m-2)(\alpha^2-\rho)\eta^2(X) ;
 \end{equation}
 (ii) in case of H(x)=0, a unit tangent vector $X$ at $x$ satisfies the equality case of (\ref{3.16}) if and only if
 $X$ lies in the relative null space $\emph{N}_x$ at $x$.\\
 (iii) the equality case of (\ref{3.16}) holds identically for all unit tangent vectors at $x$ if and only if either $x$ is a totally geodesic point or
  $m=2$ and $x$ is a totally umbilical point.
 \end{theorem}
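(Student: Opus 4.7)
The plan is to adapt B.-Y. Chen's classical inequality comparing Ricci curvature with the squared mean curvature, feeding in the $(LCS)_n$-contributions already isolated in the Gauss equation (3.3) used in the proof of Theorem 3.1. I would fix $x\in M$, choose an orthonormal frame $\{e_1,\ldots,e_m\}$ of $T_xM$ with $e_1=X$, and complete it by a normal frame $\{e_{m+1},\ldots,e_n\}$ of $T_x\widetilde{M}$. Specialising (3.3) to $X=W=e_1$, $Y=Z=e_j$ and summing over $j=2,\ldots,m$ gives an expression for $Ric(X)=\sum_{j=2}^{m}K(e_1,e_j)$ as a sum of ambient $(LCS)_n$-curvature terms involving $\eta(X)$, the mixed trace $\sum_r h_{11}^r(mH^r-h_{11}^r)$, and minus the nonnegative quantity $\sum_{j\geq 2,r}(h_{1j}^r)^2$, where $r$ ranges over the normal indices and $h_{ij}^r=g(h(e_i,e_j),e_r)$.

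The analytical core is the elementary polynomial inequality $4ab\leq(a+b)^2$ applied for each fixed normal direction $e_r$ with $a=h_{11}^r$ and $b=\sum_{j=2}^{m}h_{jj}^r=mH^r-h_{11}^r$. This yields $h_{11}^r(mH^r-h_{11}^r)\leq\tfrac14(mH^r)^2$; summation over $r$ converts the mixed trace into $\tfrac{m^2}{4}\abs{\abs{H}}^2$, and dropping the nonnegative term $\sum_{j\geq 2,r}(h_{1j}^r)^2$ produces (3.16) after collecting the $(\alpha^2-\rho)$-contributions using the identities in (2.9).

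For part (ii) with $H(x)=0$, each of the two inequalities used above becomes an equality only under explicit conditions: $4ab\leq(a+b)^2$ is tight iff $a=b$, which when $H^r=0$ forces $h_{11}^r=0$ for every $r$, and discarding $\sum_{j\geq 2,r}(h_{1j}^r)^2$ is tight iff every $h_{1j}^r=0$ for $j\geq 2$. Together these conditions say $h(X,Y)=0$ for every $Y\in T_xM$, i.e.\ $X\in N_x$; the converse is immediate since both the Chen inequality and the discarded sum become identities when $h(X,\cdot)$ vanishes.

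For part (iii), if (3.16) is tight for every unit $X$ at $x$, then applying the equality analysis to each such $X$ yields $h(X,X)=\tfrac{m}{2}H$ and $h(X,Y)=0$ whenever $X\perp Y$. Polarising via $Y=(\cos\theta)X+(\sin\theta)Z$ with unit $Z\perp X$ gives $h(X,Y)=g(X,Y)\cdot\tfrac{m}{2}H$ for all tangent $X,Y$; taking the trace of both sides then forces $mH=\tfrac{m^2}{2}H$, so either $H(x)=0$ (giving a totally geodesic point) or $m=2$ (giving a totally umbilical point with umbilical normal $H$). The main obstacle is this final polarisation-and-trace step: the intermediate identity $h(X,X)=\tfrac{m}{2}H$ is unusual, and extracting the totally geodesic / totally umbilical dichotomy from the pointwise equalities requires careful attention to the interplay between the dimension $m$ and the umbilical tensor.
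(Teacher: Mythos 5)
Your proposal is, at its analytic core, the same argument the paper gives: the paper massages the identity $m^2\abs{\abs{H}}^2=2\tau+\abs{\abs{h}}^2+(m-1)(\alpha^2-\rho)$ of Theorem 3.1 using $a^2+b^2=\tfrac12[(a+b)^2+(a-b)^2]\ge\tfrac12(a+b)^2$ with $a=h^r_{11}$, $b=h^r_{22}+\cdots+h^r_{mm}$, and then isolates $Ric(X)=\tau-\sum_{2\le i<j\le m}K_{ij}$; you compute $Ric(X)=\sum_{j\ge2}K_{1j}$ directly from (3.3) and apply $4ab\le(a+b)^2$ to the same pair $a=h^r_{11}$, $b=mH^r-h^r_{11}$. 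These are literally the same inequality, so the second-fundamental-form part of (i), and your equality analyses in (ii) and (iii) (vanishing of $h^r_{1j}$ for $j\ge2$, $2h^r_{11}=mH^r$, and the trace argument forcing $(m-2)\,\mathrm{tr}A_{e_r}=0$) coincide with the paper's.

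The one place your proposal is not yet a proof of the statement as written is the step you describe as ``collecting the $(\alpha^2-\rho)$-contributions using the identities in (2.9).'' Summing the paper's own Gauss relation $K_{1j}=\sum_r[h^r_{11}h^r_{jj}-(h^r_{1j})^2]+(\alpha^2-\rho)\eta^2(e_1)$ over $j=2,\dots,m$ produces the single term $(m-1)(\alpha^2-\rho)\eta^2(X)$, so your route yields
\begin{equation*}
4\,Ric(X)\le m^2\abs{\abs{H}}^2+4(m-1)(\alpha^2-\rho)\eta^2(X),
\end{equation*}
which is not the right-hand side of (3.16): there is no constant term $2(m-2)(\alpha^2-\rho)$ and the coefficient of $\eta^2(X)$ is $4(m-1)$ rather than $4(m-2)$. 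The paper's constants arise from a different bookkeeping, namely the $(m-1)(\alpha^2-\rho)$ term inherited from (3.1) combined with its evaluation (3.18) of $\sum_{2\le i<j\le m}K_{ij}$, and the two routes do not reconcile automatically (the underlying identity (3.3) is not symmetric in the way a curvature tensor should be, which is why the placement of the $\eta^2(e_i)$ factors matters). So you should either carry out the $(\alpha^2-\rho)$ accounting explicitly and accept that your method proves an inequality with different curvature constants, or follow the paper's detour through the scalar curvature identity if the target is precisely (3.16).
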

 \begin{proof}
 Let $X\in T_xM$ be a unit tangent vector at $x$. We choose an orthonormal basis $\{e_1,e_2,\cdots ,e_m,e_{m+1},\cdots ,e_n\}$
  such that $\{e_1,\cdots ,e_m\}$ are tangent to $M$ at $x$ and $e_1=X$. Then from (\ref{3.1}) we have
  \begin{eqnarray}
\nonumber    m^2\|H\|^2 &=& 2\tau +\displaystyle \sum_{r=m+1}^{n}\{(h_{11}^r)^2+(h_{22}^r+\cdots +h_{mm}^r)^2)\}\\
\nonumber&&-2\displaystyle\sum_{r=m+1}^{n}\sum_{21\leq i<j\leq n}h_{ii}^rh_{jj}^r+(m-1)(\alpha^2-\rho)\\
 \label{3.17} &=& 2\tau +\frac{1}{2}\displaystyle\sum_{r=m+1}^{n}\{(h_{11}^r+\cdots+h_{mm}^r)^2+(h_{11}^r-h_{22}^r-\cdots -h_{mm}^r)^2\}\\
\nonumber&&+2\displaystyle\sum_{r=m+1}^{n}\sum_{i<j}(h_{ij}^r)^2-2\displaystyle\sum_{r=m+1}^{n}\sum_{2\leq i<j\leq n}h_{ii}^rh_{jj}^r+(m-1)(\alpha^2-\rho).
  \end{eqnarray}
  From the equation of Gauss, we find
  \begin{equation*}
  K_{ij}=\displaystyle\sum_{r=m+1}^{n}[h_{ii}^rh_{jj}^r-(h_{ij}^r)^2]+(\alpha^2-\rho)\eta^2(e_i),
  \end{equation*}
   and consequently
   \begin{eqnarray}\label{3.18}
  \displaystyle\sum_{2\leq i<j\leq m}K_{ij}=\displaystyle\sum_{r=m+1}^{n}\sum_{2\leq i<j\leq m}[h_{ii}^rh_{jj}^r-(h_{ij}^r)^2]
  +(\alpha ^2-\rho)[m-2+\eta^2(X)].
   \end{eqnarray}
   Using (\ref{3.18}) in (\ref{3.17}) we get
   \begin{eqnarray}
   \label{3.19}
     m^2\abs{\abs{H}}^2 &\geq& 2\tau +\frac{m^2}{2}\abs{\abs{H}}^2+2\displaystyle\sum_{r=m+1}^{n}\sum_{j=2}^{m}(h_{1j}^r)^2-2\displaystyle\sum_{2\leq i<j\leq m}K_{ij} \\
    \nonumber&& -(\alpha^2-\rho)(m-3)-2(m-2)(\alpha^2-\rho)\eta^2(X).
   \end{eqnarray}
   Therefore,
   \begin{equation*}
      m^2\abs{\abs{H}}^2\frac{1}{2}\geq 2Ric(X)-(\alpha^2-\rho)(m-3)-2(m-2)(\alpha^2-\rho)\eta^2(X),
   \end{equation*}
   from which we get (\ref{3.16})\\
   \indent Let us assume that $H(x)=0$. Then the equality holds in (\ref{3.16}) if and only if
   \begin{equation*}
  h_{11}^r=h_{22}^r=\cdots=h_{1m}^r=0,\text{and} \ \ \ h_{11}^r=h_{22}^r+\cdots+h_{mm}^r,\ \ \  r\in \{m+1,\cdots,n\}.
  \end{equation*}

Then $h_{1j}^r=0$ for every $j\in \{1,\cdots m\},r\in\{m+1\cdots n\}$, i.e., $X\in \emph{N}_x$.\\
\indent (iii) The equality case of (\ref{3.16}) holds for every unit tangent vector at $x$ if and only if
\begin{equation*}
 h_{ij}^r=0, i\neq j\ \text{and}\  h_{11}^r+h_{22}^r+\cdots+h_{mm}^r-2h_{ii}^r=0.
\end{equation*}
We distinguish two cases:\\
(a) $m\neq 2$,then $x$ is a totally geodesic point;\\
(b)$m=2$, it follows that $x$ is a totally umbilical point.\\
The converse is trivial.
 \end{proof}
Next we obtain the following:
 \begin{theorem}
 Let $M$ be a totally real submanifold of dimension $m$ of a $(LCS)_n$ manifold $\widetilde{M}(m<n)$ . Then we have
 \begin{eqnarray}\label{4.1}
   \abs{\abs{H}}^2 \geq \frac{2\tau}{m(m-1)}+\frac{1}{m}(\alpha^2-\rho).
 \end{eqnarray}
 \end{theorem}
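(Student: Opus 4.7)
The plan is to derive the inequality as a direct consequence of Theorem 3.1 combined with the classical inequality relating the squared norm of the second fundamental form to the squared norm of the mean curvature vector. From Theorem 3.1 we already know that
\[
m^2\|H\|^2 = 2\tau + \|h\|^2 + (m-1)(\alpha^2-\rho),
\]
so the target inequality $\|H\|^2 \geq \frac{2\tau}{m(m-1)} + \frac{1}{m}(\alpha^2-\rho)$ is, after clearing denominators, equivalent to
\[
m(m-1)\|H\|^2 \geq m^2\|H\|^2 - \|h\|^2,
\]
which in turn simplifies to $\|h\|^2 \geq m\|H\|^2$.

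The main task therefore is to establish this last inequality. I would choose an orthonormal frame $\{e_1,\dots,e_m\}$ tangent to $M$ and an orthonormal normal frame $\{e_{m+1},\dots,e_n\}$, and write $h_{ij}^r = g(h(e_i,e_j),e_r)$ and $H^r = g(H,e_r) = \frac{1}{m}\sum_i h_{ii}^r$. For each fixed $r$, the Cauchy--Schwarz inequality gives
\[
\Bigl(\sum_{i=1}^m h_{ii}^r\Bigr)^2 \leq m\sum_{i=1}^m (h_{ii}^r)^2 \leq m\sum_{i,j=1}^m (h_{ij}^r)^2,
\]
so $m^2(H^r)^2 \leq m\sum_{i,j}(h_{ij}^r)^2$. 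Summing over $r$ from $m+1$ to $n$ yields exactly $m\|H\|^2 \leq \|h\|^2$.

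Substituting this bound into the Gauss-type identity of Theorem 3.1 produces
\[
m^2\|H\|^2 \geq 2\tau + m\|H\|^2 + (m-1)(\alpha^2-\rho),
\]
and dividing through by $m(m-1)$ gives (\ref{4.1}). The argument is essentially a one-line reduction to Theorem 3.1, so the only mildly delicate point is making sure the Cauchy--Schwarz step is applied correctly to both the diagonal and off-diagonal components of $h_{ij}^r$; there is no serious obstacle. It is worth remarking that equality holds precisely when $h_{ij}^r=0$ for $i\neq j$ and $h_{ii}^r$ is independent of $i$ for every $r$, i.e., when $M$ is totally umbilical.
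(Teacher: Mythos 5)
Your proposal is correct and follows essentially the same route as the paper: both reduce the inequality, via Theorem 3.1, to the elementary bound $\|h\|^2 \geq m\|H\|^2$ obtained from a Cauchy--Schwarz (sum-of-squares) estimate on the diagonal entries of the second fundamental form. The only cosmetic difference is that the paper first passes to an adapted frame in which $e_{m+1}$ is parallel to $H$ and $A_{e_{m+1}}$ is diagonal before applying $\left(\sum_i a_i\right)^2 \leq m\sum_i a_i^2$, whereas you apply the same estimate componentwise in an arbitrary frame; the content is identical.
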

 \begin{proof}
   We choose an orthonormal basis $\{e_1,\cdots e_m, e_{m+1},\cdots e_n\}$ at $x$ such that $e_{m+1}$ is parallel to the mean curvature vector $H(x)$, and
   $e_1,\cdots e_m$ diagonalise the shape operator $A_{e_{m+1}}$. Then the shape operator takes the form
   \begin{eqnarray}\label{4.2}
   A_{e_{m+1}}=
   \left(
     \begin{array}{ccccc}
       a_1 & 0 & 0 & \cdots & 0 \\
       0 & a_2 & 0 & \cdots & 0 \\
       \vdots & \vdots & \vdots & \vdots & \vdots \\
       0 & 0 & 0 & \cdots & a_n \\
     \end{array}
   \right),
   \end{eqnarray}
   $A_{e_r}=(h_{ij}^r),\ \ i,j=1,\cdots,m;r=m+2,\cdots ,n,\ \ \ \ \ traceA_{e_r}=\displaystyle \sum_{i=1}^{m}h_{ii}^r=0$ \\
 and from (\ref{3.1}) we get
   \begin{equation}\label{4.3}
   m^2\abs{\abs{H}}^2=2\tau +\displaystyle\sum_{i=1}^{m}a_i^2+\displaystyle\sum_{r=m+2}^{n}\sum_{i,j=1}^{m}(h_{ij}^r)^2+(m-1)(\alpha^2-\rho).
   \end{equation}
   On the other hand, since
   \begin{equation}\label{4.4}
   0\leq\displaystyle\sum_{i<j}(a_i-a_j)^2=(m-1)\displaystyle\sum_{i}a_i^2-2\displaystyle\sum_{i<j}a_ia_j,
   \end{equation}
   we obtain
   \begin{equation}\label{4.5}
   m^2\abs{\abs{H}}^2=\left(\displaystyle\sum_{i=1}^{m}a_i\right)^2+2\displaystyle\sum_{i}a_i^2-2\displaystyle\sum_{i<j}a_ia_j
   \leq m\displaystyle\sum_{i=1}^{m}a_i^2,
   \end{equation}
   which implies that
   \begin{equation}\label{4.6}
   \displaystyle\sum_{i}a_i^2\geq m \|H\|^2 .
   \end{equation}
  In view of (\ref{4.6}), (\ref{4.3}) yields
   \begin{equation}\label{4.7}
   m^2\|H\|^2\geq 2\tau +m\|H\|^2+(m-1)(\alpha^2-\rho),
   \end{equation}
   which implies (\ref{4.1}).
 \end{proof}
 \begin{theorem}
 Let $M$ be a totally real submanifold of dimension $m$ of a $(LCS)_n$ manifold $\widetilde{M}(m<n)$ . Then for any integer
  $k$, $2\leq k\leq m$, and any point $x\in M$, we have
  \begin{equation}\label{4.8}
  \|H\|^2(x)\geq \Theta_k(x)+\frac{1}{m}(\alpha^2-\rho).
  \end{equation}
  \end{theorem}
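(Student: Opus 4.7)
The plan is to reduce this theorem to the inequality already established in Theorem 3.3, namely
\[
\|H\|^2 \geq \frac{2\tau}{m(m-1)} + \frac{1}{m}(\alpha^2-\rho),
\]
and then to bound $\frac{2\tau}{m(m-1)}$ from below by $\Theta_k(x)$ using a purely combinatorial averaging argument on $k$-plane sections. Once that lower bound on the normalized scalar curvature is in hand, chaining it with Theorem 3.3 yields \eqref{4.8} immediately.

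First, fix a point $x\in M$ and an orthonormal basis $\{e_1,\dots,e_m\}$ of $T_xM$. For each choice of indices $1\le i_1<\cdots<i_k\le m$, let $L_{i_1\cdots i_k}$ denote the $k$-plane section of $T_xM$ spanned by $e_{i_1},\dots,e_{i_k}$. By the very definition \eqref{2.21} of $\Theta_k(x)$ as an infimum of $(k-1)$-Ricci curvatures, for every such $L$ and every unit vector $X\in L$ we have $\mathrm{Ric}_L(X)\ge (k-1)\Theta_k(x)$. Applying this to $X=e_{i_j}$ for $j=1,\dots,k$ and using \eqref{2.19}--\eqref{2.20} to express $2\tau(L_{i_1\cdots i_k})=\sum_{j=1}^{k}\mathrm{Ric}_L(e_{i_j})$, I obtain
\[
2\tau(L_{i_1\cdots i_k})\ \ge\ k(k-1)\,\Theta_k(x).
\]

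Next, I would sum this inequality over all $\binom{m}{k}$ such $k$-plane sections. On the right one simply gets $\binom{m}{k}k(k-1)\Theta_k(x)$. On the left, each sectional curvature $K_{ij}$ of a $2$-plane $\mathrm{span}(e_i,e_j)$ appears in exactly $\binom{m-2}{k-2}$ of the sums $\tau(L_{i_1\cdots i_k})$, so
\[
\sum_{1\le i_1<\cdots<i_k\le m}\!\! 2\tau(L_{i_1\cdots i_k})\ =\ 2\binom{m-2}{k-2}\tau.
\]
The combinatorial identity $\binom{m}{k}k(k-1)=m(m-1)\binom{m-2}{k-2}$ then gives $2\tau\ge m(m-1)\Theta_k(x)$, equivalently
\[
\frac{2\tau}{m(m-1)}\ \ge\ \Theta_k(x).
\]

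Finally, inserting this lower bound into the conclusion of Theorem 3.3 produces
\[
\|H\|^2(x)\ \ge\ \frac{2\tau}{m(m-1)}+\frac{1}{m}(\alpha^2-\rho)\ \ge\ \Theta_k(x)+\frac{1}{m}(\alpha^2-\rho),
\]
which is \eqref{4.8}. The only nontrivial point is the combinatorial averaging in the middle step, which is routine once the identity $\binom{m}{k}k(k-1)=m(m-1)\binom{m-2}{k-2}$ is in hand; all the geometric content has already been absorbed into Theorem 3.3.
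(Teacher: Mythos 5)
Your proof is correct and follows essentially the same route as the paper: both reduce \eqref{4.8} to Theorem 3.3 combined with the inequality $2\tau \geq m(m-1)\Theta_k(x)$. The only difference is that the paper simply cites Chen \cite{CHEN} for that inequality, whereas you supply its standard combinatorial averaging proof, which is carried out correctly.
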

  \begin{proof}
  Let $\{e_1,e_2,\cdots e_m\}$ be an orthonormal basis of $T_xM$. Denote by $L_{i_1,\cdots i_k}$ the $k$-plane section spanned by
  $e_{i_1},\cdots e_{i_k}$. Then we have \cite{CHEN}
  \begin{equation}\label{4.9}
  \tau(x)\geq \frac{m(m-1)}{2}\Theta_k(x).
  \end{equation}
  Using (\ref{4.9}) in (\ref{4.1}), (\ref{4.8}) follows.
  \end{proof}
\vspace{0.1in}
\noindent{\bf Acknowledgement:} The first author (S. K. Hui) gratefully acknowledges to
the SERB (Project No.: EMR/2015/002302), Govt. of India for financial assistance of the work.

\vspace{0.1in}
\noindent S. K. Hui\\
Department of Mathematics, The University of Burdwan, Golapbag, Burdwan -- 713104, West Bengal, India\\
E-mail: shyamal\_hui@yahoo.co.in, skhui@math.buruniv.ac.in\\

\noindent T. Pal\\
A. M. J. High School, Mankhamar, Bankura -- 722144, West Bengal, India\\
E-mail: tanumoypalmath@gmail.com
\end{document}